\theoremstyle{plain}
\newtheorem{theorem}{Theorem}
\newtheorem{lemma}[theorem]{Lemma}
\newtheorem{definition}[theorem]{Definition}}
\newtheorem{remark}[theorem]{Remark}}
\newtheorem{algorithm}[theorem]{Algorithm}}
\renewcommand{\geq}{\geqslant}
\renewcommand{\leq}{\leqslant}
\renewcommand{\ge}{\geqslant}
\renewcommand{\le}{\leqslant}
\def\GAP{{\sf GAP}}
\def\Magma{{\sc Magma}}
\newcommand{\Alt}{\mathrm{Alt}}
\newcommand{\F}{\mathbb{F}}
\newcommand{\PGL}{\textup{PGL}}
\newcommand{\GL}{\textup{GL}}
\newcommand{\Sym}{\mathrm{Sym}}
\newcommand{\multiset}[1]{\{\kern-1.2pt\{ {#1}\}\kern-1.2pt\}}
\newcommand{\Z}{\mathbb{Z}}
\begin{document}

\hyphenation{}

\verso{S.\,P. Glasby, F. L\"ubeck, A.\,C. Niemeyer and C.\,E. Praeger}
\recto{Primitive prime divisors and the $n$-th cyclotomic polynomial}

\title{Primitive prime divisors and the\\ ${\large \boldsymbol{n}}$-th cyclotomic polynomial}

\cauthormark 

\author[1]{S.\,P. Glasby}
\author[2]{Frank L\"ubeck}
\author[3]{Alice C. Niemeyer}
\author[4]{Cheryl E. Praeger}

\address[1]{Centre for Mathematics of Symmetry and Computation,
  University of Western Australia; also affiliated with The Department of Mathematics, University of Canberra.\; {\tt {\small GlasbyS@gmail.com}}\;
 {\tt\href{http://www.maths.uwa.edu.au/~glasby/}{\small http://www.maths.uwa.edu.au/$\sim$glasby/}}}

\address[2]{Lehrstuhl D f\"ur Mathematik, RWTH Aachen University, Templergraben 64,
  52062 Aachen, Germany.\;{\tt\small  Frank.Luebeck@Math.RWTH-Aachen.De}\newline 
  {\tt\href{http://www.math.rwth-aachen.de/~Frank.Luebeck/}{\small http://www.math.rwth-aachen.de/$\sim$Frank.Luebeck/}}}

\address[3] {Department of Mathematics and Statistics,
NUI Maynooth, Ireland. \;{\tt\small  Alice.Niemeyer@nuim.ie}
}

\address[4]{Centre for Mathematics of Symmetry and
Computation, University of Western Australia;
also affiliated with King Abdulaziz University, Jeddah, Saudi Arabia.\;
{\tt\small  Cheryl.Praeger@uwa.edu.au}\;
{\tt\href{http://www.maths.uwa.edu.au/~praeger}{\small http://www.maths.uwa.edu.au/$\sim$praeger}}}


\pages{1}{13}

\begin{abstract}
Primitive prime divisors play an important role in group theory
and number theory. 
We study a certain number theoretic quantity, called
$\Phi^*_n(q)$,
which is closely related to 
the cyclotomic polynomial $\Phi_n(x)$ and to primitive
prime divisors of $q^n-1$.
Our definition of $\Phi^*_n(q)$ is novel, and we prove it
is equivalent to the definition given by Hering.
Given positive constants $c$ and $k$, we give an algorithm for determining
all pairs $(n,q)$ with $\Phi^*_n(q)\le cn^k$. This algorithm
is used to extend (and correct) a result of Hering which is useful for
classifying certain families of subgroups of finite linear groups.
\end{abstract}

\keywords[2010 \textit{Mathematics subject classification}]{
11T22, 11Y40, 20G05}

\keywords[\textit{Keywords and phrases}]{Zsigmondy primes,
Cyclotomic Polynomial}

\maketitle

\textit{Dedicated to the memory of our esteemed colleague L.G. (Laci) Kov\'acs}

\section{Introduction}
In 1974  Christoph Hering~\cite{Hering} 
classified the subgroups $G$ of the general linear group
$\GL(n,\F_q)$ which act transitively
on the nonzero vectors $(\F_q)^n\setminus\{0\}$. In his
investigations a certain number theoretic function, $\Phi^*_n(q)$, plays an
important role. It divides the $n$th cyclotomic polynomial evaluated at a
prime power~$q$, and hence divides $|(\F_q)^n\setminus\{0\}|=q^n-1$.
It is not hard to prove that $\GL(n,\F_q)$ contains an element of
order~$\Phi^*_n(q)$, and every element $g$ of $\GL(n,\F_q)$ whose order
is not coprime to $\Phi^*_n(q)$ acts irreducibly on the natural module
$(\F_q)^n$, c.f. \cite[Theorem  3.5]{Hering}. 
A key  result \cite[p.1]{Hering} shows that if
$1<\gcd(|G|,\Phi^*_n(q))\le (n+1)(2n+1)$,
then the structure of $G$ is severely constrained.

Our definition  below of~$\Phi_n^*(q)$  differs from  the one  used by
Hering~\cite[p.\;1]{Hering},  L\"uneburg~\cite[Satz~2]{L}  and  Camina
and   Whelan~\cite[Theorem~3.23]{CW},  who used  the   definition   in
Lemma~\ref{Ldef}\eqref{LdefD}.           We           show          in
Section~\ref{sec:equivalentdefs} that our  definition is equivalent to
theirs   and  that $\Phi_n^\ast(q)$  could have  also been
defined in several other ways.

\begin{definition}\label{def:phistar}
Suppose $n,q\in\Z$ are such that $n\ge 1$ and $q\ge2$.  
  Write $\Phi_n(X)$ for the $n$-th cyclotomic polynomial
  $\prod_\zeta (X-\zeta)$ where $\zeta$ ranges over the
  primitive complex $n$-th roots of unity.
  Let $\Phi_n^*(q)$ be the largest
  divisor of $\Phi_n(q)$ which is coprime to 
  $\prod_{1 \leq k < n}(q^k-1)$.
\end{definition}

Our definition of $\Phi_n^*(q)$ is motivated by the numerous
applications of primitive prime divisors, see \cite{NieP}
or \cite{BambergPenttila,GPPS}. As our primary motivation is geometric,
we will assume later (after Section~\ref{M})
that~$q$ is a prime power; before this point $q\ge 2$ is arbitrary
unless otherwise stated.
A divisor $m$ of $q^n-1$ is called a {\it strong primitive divisor of
  $q^n-1$} if $\gcd(m,q^k-1)=1$ for $1 \leq k < n$, and a {\it weak
  primitive divisor of $q^n-1$} if $m\nmid (q^k-1)$ for $1 \leq k < n$.
By our definition,
$\Phi_n^*(q)$ is the largest strong primitive divisor of $q^n-1$.
A primitive divisor of $q^n-1$ which is prime is called a
{\it primitive prime divisor} ({\it ppd}) of $q^n-1$ or a Zsigmondy
prime (``strong'' equals ``weak'' for primes).
DiMuro~\cite{DiMuro} uses 
weak primitive {\it prime power} divisors or {\it pppds}
 to extend the classification
in~\cite{GPPS} to $d/3< n\le d$. Our application in Section~\ref{sec:Ac}
has $d/4\le n\le d$.

Primitive  prime divisors  have  been  studied since  Bang~\cite{Bang}
proved in  1886 that  $q^n-1$ has  a primitive  prime divisor  for all
$q\ge2$ and $n>2$ except for $q=2$ and $n=6$. Given coprime integers $q>r\ge1$ and
$n>2$, Zsigmondy~\cite{Z} proved in 1892 that there exists a prime~$p$
dividing $q^n-r^n$ but not $q^k-r^k$ for $1\le k<n$ except when $q=2$,
$r=1$, and $n=6$.  The Bang-Zsigmondy theorem has been  reproved many times
as explained in~\cite[p.\;27]{Rib} and~\cite[p.\,3]{DHP}; modern proofs
appear in~\cite{L,R}.  Feit~\cite{Feit}
studied `large Zsigmondy primes', and  these play a fundamental role
in  the   recognition  algorithm  in   \cite{NieP}.   Hering's
results in \cite{Hering} influenced  subsequent work on linear groups,
including  the classification  of linear  groups containing  primitive
prime  divisor  (ppd)-elements  \cite{GPPS}, and  its  refinements  in
\cite{ BambergPenttila, DiMuro, NieP}.

We  describe algorithms in Sections~\ref{M} and~\ref{M*}
which,  given positive constants $c$ and  $k$, list
all  pairs $(n,  q)$ for  which $n\ge3$ and $\Phi^*_n(q)\leq  c  n^k$. 
The behaviour of $\Phi_n^*(q)$ for $n=2$ is different from that for
larger~$n$ (see Lemma~\ref{Ldef}(\ref{LdefC}) and Algorithm~\ref{alg:Mstarn2}).

\begin{theorem}\label{the:one} Let $q\ge2$ be a prime power.
\begin{enumerate}[{\rm (a)}]
  \item There is an algorithm which, given constants $c,k>0$ as input, outputs
  all pairs $(n,q)$ with $n\ge3$ and $q\ge2$ a prime power such
  that $\Phi^*_n(q)\le cn^k$.
  \item If $n\ge 3$, then $\Phi^*_n(q)\leq n^4$ if and only if $(n,q)$
    is listed in \emph{Tables~\ref{tab:onea}, \ref{tab:six}} or
    \emph{\ref{tab:oneb}}. Moreover, the prime powers $q$ with
    $q\le 5000$ and $\Phi^*_2(q)\leq 2^4=16$ are listed in
    \emph{Table~\ref{tab:q=2}}.
\end{enumerate}
\end{theorem}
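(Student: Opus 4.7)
My plan is to prove (a) by establishing effective upper bounds $N = N(c,k)$ and $Q = Q(c,k)$ outside of which the inequality $\Phi_n^*(q) \le c n^k$ cannot hold, reducing the problem to a bounded finite search; statement (b) is then the result of running this algorithm with $c = 1$, $k = 4$, together with a separate direct check for $n = 2$.

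The first step is to translate the bound on $\Phi_n^*(q)$ into a bound on $\Phi_n(q)$. By the equivalence of definitions to be proved in Section~\ref{sec:equivalentdefs}, the quotient $\Phi_n(q)/\Phi_n^*(q)$ is either $1$ or a prime divisor of $n$, so
\[
\Phi_n(q) \;\le\; n \cdot \Phi_n^*(q) \;\le\; c\, n^{k+1}.
\]
Next one needs a matching lower bound on $\Phi_n(q)$. The crude estimate $|q-\zeta| \ge q-1$ for $|\zeta|=1$ already gives $\Phi_n(q) \ge (q-1)^{\varphi(n)}$, which is enough when $q \ge 3$. For the troublesome case $q = 2$ a sharper estimate of the form $\Phi_n(2) \ge c_0 \cdot 2^{\varphi(n)}$, for an explicit absolute constant $c_0 > 0$ and $n \ge 3$, is required; this should follow from a careful analysis of $\prod_\zeta |2-\zeta|$ and will, I expect, form part of Sections~\ref{M} and~\ref{M*}.

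Combining the two estimates yields an inequality of the shape $q^{\varphi(n)} \le C c\, n^{k+1}$ for an explicit constant $C$. Since $\varphi(n) \ge \sqrt{n/2}$ for $n \ge 2$, this forces $n \le N(c,k)$ even when $q = 2$, and for each such $n$ it forces $q \le Q_n(c,k)$. The algorithm therefore loops over $n \in \{3, \ldots, N\}$, enumerates the prime powers $q \in [2, Q_n]$, evaluates $\Phi_n^*(q)$ directly from Definition~\ref{def:phistar} (or equivalently by dividing $\Phi_n(q)$ by the largest prime factor of $n$ when it appears), and outputs those pairs for which $\Phi_n^*(q) \le cn^k$.

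For part (b), applying the algorithm with $c = 1$, $k = 4$ should make $N$ and the $Q_n$ small enough that the enumeration is feasible by computer algebra; the surviving pairs with $n \ge 3$ are then precisely those recorded in Tables~\ref{tab:onea}, \ref{tab:six} and~\ref{tab:oneb}. The auxiliary list of prime powers $q \le 5000$ with $\Phi_2^*(q) \le 16$ is a separate bounded search using the closed-form expression for $n = 2$ in Lemma~\ref{Ldef}\eqref{LdefC} and the specialised Algorithm~\ref{alg:Mstarn2}. The chief obstacle I anticipate is the lower bound for $\Phi_n(q)$ at $q = 2$: the naive estimate $(q-1)^{\varphi(n)}$ collapses to $1$, and keeping $N(c,k)$ small enough for the enumeration in (b) to be practical requires a genuinely sharp growth estimate on $\Phi_n(2)$.
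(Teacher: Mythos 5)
Your proposal is correct and follows essentially the same route as the paper: reduce $\Phi_n^*(q)\le cn^k$ to $\Phi_n(q)\le cn^{k+1}$ via the fact (Lemma~\ref{Ldef}) that $\Phi_n(q)/\Phi_n^*(q)$ is $1$ or a prime divisor of $n$ for $n\ge 3$, then use an explicit lower bound $\Phi_n(q)>q^{\varphi(n)}/4$ (the paper's Lemma~\ref{lemma:phiq}\eqref{lemma:phiqbound}, which supplies exactly the constant $c_0=1/4$ you anticipated needing) together with a lower bound on $\varphi(n)$ to bound $n$, and then bound $q$ for each fixed $n$. The only differences are cosmetic: the paper uses the sharper bound $\varphi(n)\ge n/(\log_2 n+1)$ rather than your $\varphi(n)\ge\sqrt{n/2}$, and packages the termination test into a monotonicity analysis of an auxiliary function $g$ rather than a precomputed ceiling $N(c,k)$; both yield a correct finite search and the same Theorem~\ref{the:one}.
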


In some group theoretic applications we need explicit information
about~$\Phi_n^*(q)$ when this quantity is considerably larger than~$n^4$,
but we have tight control over the sizes of its ppd divisors (each of
which must be of the form~$in+1$ by
Lemma~\ref{lemma:phiq}(\ref{lemma:phiq1modn})). We give an example of
this kind of result in Theorem~\ref{the:two}, where we require that the ppd divisors
are sufficiently small for our group theoretic application in
Section~\ref{sec:Ac}.
This  motivated  our
effort to  strengthen Hering's  result and  we discovered  two missing
cases in \cite[Theorem 3.9]{Hering}; see Remark~\ref{rem:one}.
We list in Theorem~\ref{the:one}
all pairs $(n,q)$ with $n \geq 3$ and $q\ge2$ a prime power for which
$\Phi^*_n(q)\le n^4$; the implementations in~\cite{G} can handle much
larger cases like $\Phi^*_n(q)\le n^{20}$.
In Theorem~\ref{the:two} we also require that the ppd 
divisors of $\Phi^*_n(q)$ be small
for our group theoretic application in
Section~\ref{sec:Ac}. 

\begin{theorem}\label{the:two}
Suppose   that  $q\ge2$   is  a  prime  power and   $n\geq 3$.  
Then all  possible values of $(n,  q)$  such that 
$\Phi^*_n(q)$ has a prime factorisation  of
the form   $\prod_{i=1}^4  (i  n+1)^{m_i},$  with   $0\le m_1\le  3$ and
$0\le m_2,m_3,m_4\le 1$
are listed  in \emph{Table~\ref{tab:two}}.
\end{theorem}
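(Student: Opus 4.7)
The plan is to reduce Theorem~\ref{the:two} to a finite computation that is covered by Theorem~\ref{the:one}(a). The hypothesis on the prime factorisation of $\Phi_n^*(q)$ forces a polynomial upper bound on $\Phi_n^*(q)$ in terms of~$n$; the algorithm of Theorem~\ref{the:one}(a) then produces a finite list of candidate pairs $(n,q)$, from which the correct entries are extracted by direct inspection.

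First I would derive the polynomial bound. Under the hypothesis
\[
\Phi^*_n(q)=\prod_{i=1}^{4}(in+1)^{m_i},\quad 0\le m_1\le 3,\; 0\le m_2,m_3,m_4\le 1,
\]
so $\Phi^*_n(q)\le (n+1)^3(2n+1)(3n+1)(4n+1)$. For $n\ge 3$ each factor satisfies $in+1\le \frac{3i+1}{3}n$, giving the crude but explicit bound
\[
\Phi^*_n(q)\le \frac{64\cdot 7\cdot 10\cdot 13}{27\cdot 27}\,n^6 < 80\,n^6.
\]

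Next I would apply Theorem~\ref{the:one}(a) with $c=80$ and $k=6$ to obtain the finite list~$\mathcal{L}$ of all pairs $(n,q)$ with $n\ge 3$, $q\ge 2$ a prime power, and $\Phi^*_n(q)\le 80\,n^6$. By the bound just derived, every pair satisfying the hypothesis of Theorem~\ref{the:two} belongs to~$\mathcal{L}$. For each $(n,q)\in\mathcal{L}$, the remaining verification is routine: compute $\Phi^*_n(q)$ (via the methods of Section~\ref{M*}), factorise it into primes, and check whether every prime divisor has the form $in+1$ with $i\in\{1,2,3,4\}$ and multiplicity within the stated bounds. The surviving pairs form Table~\ref{tab:two}; the degenerate case $\Phi^*_n(q)=1$ (all $m_i=0$) is included whenever it arises, and by Bang--Zsigmondy this only happens at $(n,q)=(6,2)$ for $n\ge 3$.

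The main obstacle is controlling the size of $\mathcal{L}$: at exponent $k=6$ the algorithm of Theorem~\ref{the:one}(a) can produce many candidates, particularly for small~$n$ where many prime powers~$q$ satisfy the bound. Once $\mathcal{L}$ is in hand, however, the per-candidate check is a routine prime factorisation well within the reach of the implementation in~\cite{G}. In principle the bound $80\,n^6$ could be sharpened by exploiting Lemma~\ref{lemma:phiq}\eqref{lemma:phiq1modn} to exclude prime factors not congruent to~$1$ modulo~$n$, but this optimisation is not needed: Theorem~\ref{the:one}(a) together with a straightforward sieve is sufficient to produce Table~\ref{tab:two}.
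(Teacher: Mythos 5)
Your proposal is essentially the same as the paper's proof: bound $\Phi_n^*(q)$ by $(n+1)^3(2n+1)(3n+1)(4n+1)$, convert this to a polynomial bound $cn^k$, feed that to Algorithm~\ref{alg:Mstarnbig} (via Theorem~\ref{the:one}(a)), and then sieve the output for the prescribed factorisation. The only difference is cosmetic: the paper uses the bound $16n^7$ (valid for $n\ge4$, with $n=3$ handled separately by noting $n+1=4$ and $3n+1=10$ are not prime), and so runs the algorithm with $(c,k)=(16,7)$, whereas you absorb $n=3$ into a uniform bound $80n^6$ and run it with $(c,k)=(80,6)$. Both are valid; neither choice changes the argument.
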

 
The proof of Theorem~\ref{the:one}(a) rests on 
the correctness of Algorithms~\ref{alg:M} and~\ref{alg:Mstarnbig} which
are proved in Sections~\ref{M} and~\ref{M*}. 
Theorem~\ref{the:one}(b) and \ref{the:two} follow by applying these algorithms.
For Theorem~\ref{the:two} we  observe that 
$\Phi_n^\ast(q) \le (n+1)^3\prod_{i=2}^4  (i  n+1) \le  16n^7$ for all $n\ge 4$,
whereas  for $n=3$ only $2n+1$ and $4n+1$ are primes and again
$\Phi_n^\ast(q) \le 7\cdot13 \le 16n^7.$
Thus the entries in Table~\ref{tab:two} were obtained by 
searching the output of our algorithms to find
the pairs $(n,q)$ for which $\Phi^*_n(q)\le 16n^7$  and has the given
factorisation. This factorisation arose from the application
(Theorem~\ref{the:Ac}) in Section~\ref{sec:Ac}.

\begin{remark}\label{rem:one}
The missing cases in part~(d) of \cite[Theorem 3.9]{Hering} had
$\Phi^*_n(q)=(n+1)^2$. We discovered the possibilities
$n=2, q=17,$ and $n=2, q=71$ when comparing Hering's result with output of the
\Magma~\cite{BCP} and \GAP~\cite{GAP} implementations of our algorithms,
see Table~\ref{tab:q=2}.
\end{remark}

\section{Cyclotomic polynomials: elementary facts}

The product $\prod_{1 \leq k < n}(q^k-1)$ has no factors when $n=1$.
An empty product is~1, by convention, and so $\Phi_1^*(q)=\Phi_1(q)=q-1$. 

The M\"obius function $\mu$ satisfies $\mu(n)=(-1)^k$ if
$n=p_1\cdots p_k$ is a product of distinct primes, and $\mu(n)=0$
otherwise. Our algorithm  uses the following elementary facts.

\begin{lemma}\label{lemma:phiq}
Let $n$ and $q$ be integers satisfying $n\ge1$ and $q\ge2$.
\begin{enumerate}[{\rm (a)}]
  \item\label{lemma:phiqmoebius}
  The polynomial $\Phi_n(X)$ lies in $\Z[X]$ and is irreducible. Moreover,
  \[
    X^n-1 = \prod_{d|n} \Phi_d(X)\qquad\textrm{and}\qquad
    \Phi_n(X) = \prod_{d|n} (X^\frac{n}{d}-1)^{\mu(d)}.
  \]
  \item\label{lemma:phidivide} If $d\mid n$ and $d>1$, then $\Phi_n(X)$ divides
  $(X^n-1)/(X^{n/d}-1)=\sum_{i=0}^{d-1} (X^{n/d})^i$.
  \item\label{lemma:phiq1modn}
  If $r$ is a prime and $r\mid \Phi_n^*(q)$, then $n$ divides $r-1$,
  equivalently $r \equiv 1\pmod{n}$.
  \item\label{lemma:phiqincreasing}
  For any fixed integer $n\ge1$ the function $\Phi_n(q)$ is strictly increasing
  for $q>1$.
  \item\label{lemma:phiqeuler} 
  Let $\varphi$ be Euler's totient function which satisfies
  $\varphi(n) = \deg(\Phi_n(X))$. Then 
  \[
    \varphi(n) \geq \frac{n}{\log_2(n)+1}\qquad\textup{for $n\ge1$.}
  \]
  \item\label{lemma:phiqbound}
  For all $n\ge2$ and $q\ge2$ we have
  $q^{\varphi(n)}/4< \Phi_n(q)<4q^{\varphi(n)}$.
\end{enumerate}
\end{lemma}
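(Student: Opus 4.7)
Parts~(\ref{lemma:phiqmoebius})--(\ref{lemma:phiq1modn}) are essentially bookwork. For~(\ref{lemma:phiqmoebius}) I would cite a standard reference (e.g.\ Lang's \emph{Algebra}) for the integrality and irreducibility of $\Phi_n(X)$; the identity $X^n - 1 = \prod_{d \mid n}\Phi_d(X)$ arises from grouping the complex $n$th roots of unity by their exact order, and M\"obius inversion in multiplicative form yields $\Phi_n(X) = \prod_{d \mid n}(X^{n/d} - 1)^{\mu(d)}$. Part~(\ref{lemma:phidivide}) is immediate from~(\ref{lemma:phiqmoebius}): $\Phi_n$ is one of the factors of $X^n - 1$ but of none of the $X^{n/d} - 1$ with $d > 1$, and the quotient is the geometric series $\sum_{i=0}^{d-1}(X^{n/d})^i$. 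For~(\ref{lemma:phiq1modn}), if a prime $r$ divides $\Phi_n^*(q)$ then $r \mid \Phi_n(q) \mid q^n - 1$, so the multiplicative order of $q$ modulo $r$ divides $n$; since $\gcd(\Phi_n^*(q), q^k - 1) = 1$ for $1 \le k < n$ by Definition~\ref{def:phistar}, this order equals $n$, and Fermat's little theorem then gives $n \mid r - 1$.

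For~(\ref{lemma:phiqincreasing}) I would write $\Phi_n(q) = \prod_\zeta(q - \zeta)$ over the primitive complex $n$th roots $\zeta$ and pair each non-real $\zeta$ with its conjugate $\bar\zeta$. Each such pair contributes $q^2 - 2q\Re\zeta + 1$, a quadratic with negative discriminant (hence positive for all real $q$) and derivative $2q - 2\Re\zeta > 0$ whenever $q > 1 \ge \Re\zeta$. The real primitive roots $\pm 1$ occur only for $n \in \{1, 2\}$ and give strictly increasing linear factors $q \mp 1$. A finite product of positive strictly increasing functions is itself strictly increasing.

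For~(\ref{lemma:phiqeuler}) I would reduce to the squarefree case (since $\varphi(n)/n$ depends only on the primes dividing $n$) and write the squarefree radical as $p_1 p_2 \cdots p_k$ with $p_1 < p_2 < \cdots < p_k$. Then I would induct on $k$ to prove the equivalent bound $\prod_{i=1}^k p_i/(p_i - 1) \le 1 + \sum_{i=1}^k \log_2 p_i$. The inductive step reduces to showing $1 + \sum_{i < k}\log_2 p_i \le (p_k - 1)\log_2 p_k$; using $p_i \le p_k$ for $i < k$, this in turn reduces to $1 \le (p_k - k)\log_2 p_k$, which holds because $p_k$ is at least the $k$th prime (so $p_k \ge k + 1$) and $\log_2 p_k \ge 1$.

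The main obstacle will be~(\ref{lemma:phiqbound}), where the constant $4$ requires care. My plan is to use the M\"obius formula from~(\ref{lemma:phiqmoebius}) together with the identity $\sum_{d \mid n}(n/d)\mu(d) = \varphi(n)$ to rewrite
\[
  \frac{\Phi_n(q)}{q^{\varphi(n)}} \;=\; \prod_{d \mid n}\bigl(1 - q^{-n/d}\bigr)^{\mu(d)} \;=\; \frac{\prod_{d \mid n,\, \mu(d) = +1}(1 - q^{-n/d})}{\prod_{d \mid n,\, \mu(d) = -1}(1 - q^{-n/d})}.
\]
Each factor $(1 - q^{-m})$ lies in $(0, 1)$, so numerator and denominator are both at most $1$. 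Extending either product to range over all divisors of $n$, and thereafter to all positive integers $m$, only multiplies in further factors from $(0, 1)$, so each exceeds $\prod_{m \ge 1}(1 - 2^{-m})$. A direct truncation shows $\prod_{m \ge 1}(1 - 2^{-m}) > 1/4$ (for instance $(1/2)(3/4)(7/8) \cdot \prod_{m \ge 4}(1 - 2^{-m}) > (21/64)(7/8) = 147/512 > 1/4$). Combining these two-sided bounds on numerator and denominator yields $1/4 < \Phi_n(q)/q^{\varphi(n)} < 4$, as required.
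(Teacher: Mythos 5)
Your proposal is correct throughout, and parts (a)--(d) and (f) follow essentially the same route as the paper. Two points of genuine divergence are worth noting.

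For part (\ref{lemma:phiqeuler}) the paper's argument is shorter: starting from $\varphi(n)=n\prod_{i=1}^t (p_i-1)/p_i$ with $p_1<\cdots<p_t$, it uses the trivial bound $p_i\geq i+1$ to get $\prod_i (p_i-1)/p_i\geq\prod_i i/(i+1)=1/(t+1)$ by telescoping, and then $2^t\leq p_1\cdots p_t\leq n$ gives $t\leq\log_2 n$ directly. Your induction on the number of distinct prime factors gives the same bound and is sound (including the base case and the reduction to $1\leq(p_k-k)\log_2 p_k$, which uses $p_k\geq k+1$), but it is a longer path to the same inequality. Both are perfectly valid; the telescoping trick is just tidier.

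For part (\ref{lemma:phiqbound}) your argument and the paper's are the same in substance: split the M\"obius product into the factors with $\mu(d)=+1$ (numerator) and $\mu(d)=-1$ (denominator), note each lies in $(1/4,1]$ by comparison with $\prod_{m\geq1}(1-2^{-m})$, and combine. The paper simply quotes the numerical value $0.28878\cdots$ for that infinite product, whereas your truncation plus the Weierstrass-type bound $\prod_{m\geq4}(1-2^{-m})\geq 1-\sum_{m\geq4}2^{-m}=7/8$ gives a self-contained verification that the constant exceeds $1/4$. That is a small but real improvement in rigour. One cosmetic remark: you should note that the exponents $n/d$ appearing in the numerator (resp.\ denominator) product are \emph{distinct} divisors of $n$, so that enlarging to $\prod_{m\geq1}(1-q^{-m})$ really does only introduce additional factors in $(0,1)$; you say ``extending either product to range over all divisors'' which makes this implicit but clear enough.

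Everything else matches the paper: (a) by citation plus M\"obius inversion, (b) by reading off which $\Phi_k$ appear in $(X^n-1)/(X^{n/d}-1)$, (c) by showing the order of $q$ mod $r$ is exactly $n$ (you spell out the step the paper leaves tacit, namely that $\gcd(\Phi_n^*(q),q^k-1)=1$ rules out a smaller order), and (d) by observing each factor $|q-\zeta|$ is increasing in $q$ for $q>1$, whether handled via conjugate pairs as you do or via $\prod_\zeta|q-\zeta|$ directly as the paper does.
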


\begin{proof}
\noindent(\ref{lemma:phiqmoebius})
The irreducibility of $\Phi_n(X)\in\Z[X]$ and the other facts, are proved
in~\cite[\S13.4]{DF}.

\noindent(\ref{lemma:phidivide})
By part~\eqref{lemma:phiqmoebius} $(X^n-1)/(X^{n/d}-1)$ equals
$\prod_k\Phi_k(X)$ where $k\mid n$ and $k\nmid (n/d)$. Since
$d>1$, it follows that $\Phi_n(X)$ is a factor in this product.

\noindent(\ref{lemma:phiq1modn}) 
If $r\mid\Phi_n^*(q)$ then $r\mid(q^n-1)$ and $n$ is the order of $q$ modulo $r$,
so $n\mid(r-1)$.

\noindent(\ref{lemma:phiqincreasing})
This follows from Definition~\ref{def:phistar}
because $\Phi_n(q)=|\Phi_n(q)|=\prod_\zeta|q-\zeta|$ and $|\zeta|=1$.

\noindent(\ref{lemma:phiqeuler})
We use the formula $\varphi(n) = n \prod_{i=1}^t \frac{p_i-1}{p_i}$ where
$p_1 < p_2 < \cdots < p_t$ are the prime divisors of $n$. Using the trivial
estimate $p_i \geq i+1$ we get $\varphi(n) \geq n/(t+1)$. It follows from
$2^t\le p_1p_2\cdots p_t\le n$ that $t \leq \log_2(n)$. Hence
$\varphi(n) \geq n/(\log_2(n)+1)$ as claimed.

\noindent(\ref{lemma:phiqbound})
Using the product formula for $\Phi_n(X)$ in~(\ref{lemma:phiqmoebius}) and
$\mu(d) \in \{0,-1,1\}$, we see that $\Phi_n(q)$ equals $q^{\varphi(n)}$
times a product of distinct factors of the form $(1-1/q^i)^{\pm 1}$ with $1 \leq
i \leq n$.  Since $\prod_{i=1}^\infty (1-1/q^i) \geq \prod_{i=1}^\infty
(1-1/2^i) = 0.28878\cdots > 1/4$ we get 
\[ \frac{q^{\varphi(n)}}{4} < \Phi_n(q) < 4 q^{\varphi(n)}.\]
\end{proof}

\begin{remark} 
Hering~\cite[Theorem~3.6]{Hering} gives sharper estimates than those in
Lemma~\ref{lemma:phiq}(\ref{lemma:phiqbound}).
But our (easily established) estimates suffice for the
efficient algorithms below.
\end{remark}

\section{Equivalent definitions of \texorpdfstring{$\Phi_n^*(q)$}{}}\label{sec:equivalentdefs}

We now state equivalent ways  in which to define $\Phi_n^\ast(q)$ where $q\ge2$
is an integer. Because our motivation for studying $\Phi^*_n(q)$ arose from
finite geometry, we assume after the proof of Lemma~\ref{Ldef}
that~$q$ is a prime power. Observe that
Lemma~\ref{Ldef}\eqref{LdefC} suggests a much faster algorithm
for computing~$\Phi_n^*(q)$ than does Definition~\ref{def:phistar}.

\begin{lemma}\label{Ldef}
Let $n,q$ be integers such that $n\ge2$ and $q\ge2$.
The following statements could be used as alternatives to the definition
of~$\Phi^*_n(q)$ given in Definition~\ref{def:phistar}.
\begin{enumerate}[{\rm (a)}]
  \item\label{LdefA}
   $\Phi_n^*(q)$ is the largest divisor
   of $\Phi_n(q)$ coprime to $\prod_{k\mid n,\,k<n}\Phi_k(q)$.
  \item\label{LdefC}
  Let $(q+1)_2$ be the largest power of~$2$ dividing $q+1$, and let~$r$ be
  the largest prime divisor of $n$. Then
  \[
   \Phi_n^*(q) = \begin{cases} (q+1)/(q+1)_2\quad\quad&\textup{if $n=2$,}\\
   \Phi_n(q)&\textup{if $n>2$ and $r\nmid\Phi_n(q)$,}\\
   \Phi_n(q)/r &\textup{if $n>2$ and $r\mid\Phi_n(q)$.}\end{cases}
  \]
  \item\label{LdefD} $\Phi_n^*(q)=\Phi_n(q)/f^i$ where $f^i$ is the largest
  power of $f:=\gcd(\Phi_n(q),n)$ dividing $\Phi_n(q)$. 
\end{enumerate}
\end{lemma}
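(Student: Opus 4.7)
The plan is to extract all three equivalences from a single structural fact about how a prime $p$ can divide $\Phi_n(q)$: if $p\mid \Phi_n(q)$ and $m := \mathrm{ord}_p(q)$, then either $n=m$ (so $p$ is a primitive prime divisor of $q^n-1$) or $p\mid n$ and $n = mp^a$ for some $a\ge 1$. This is classical and follows from the congruence $\Phi_n(X) \equiv \Phi_m(X)^{\varphi(n/m)}\pmod p$ when $n = mp^a$, together with the splitting of $X^n-1$ over $\F_p$. The immediate corollary is that if a prime $p$ divides both $\Phi_n(q)$ and $\Phi_k(q)$ for some $k<n$, then $n$ and $k$ both lie in $\{m, mp, mp^2,\ldots\}$, forcing $k\mid n$ and $p\mid n$. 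From this corollary, part \eqref{LdefA} is immediate: rewriting $q^k-1 = \prod_{d\mid k}\Phi_d(q)$ shows that the primes of $\Phi_n(q)$ that obstruct coprimality to $\prod_{k<n}(q^k-1)$ coincide with those in $\prod_{d\mid n,\,d<n}\Phi_d(q)$.

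For parts \eqref{LdefC} and \eqref{LdefD} with $n>2$, the key step is to pin down the obstructing primes precisely. If $p\mid \Phi_n(q)\cap n$, then writing $n = mp^a$ with $a\ge 1$ and $m\mid p-1$ shows that every prime factor of $m$, and hence every prime factor of $n$ other than $p$, is less than $p$; thus $p$ must equal the largest prime $r$ dividing $n$. I would then show $v_r(\Phi_n(q)) = 1$. For odd $r$, the Lifting-the-Exponent lemma gives $v_r((q^m)^{r^j}-1) = v_r(q^m-1) + j$ for $j\ge 0$, and telescoping in the factorisation $q^{mr^j}-1 = \prod_{d\mid mr^j}\Phi_d(q)$ (in which only the divisors $d = mr^i$ carry positive $r$-adic valuation) yields $v_r(\Phi_{mr^j}(q))=1$ for every $j\ge 1$. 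For $r=2$, one has $m=1$ and $n=2^a$ with $a\ge 2$, and the direct computation $\Phi_{2^a}(q) = q^{2^{a-1}}+1 \equiv 2\pmod 8$ for odd $q$ handles this case.

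The remaining parts now assemble from the above. For $n>2$, the obstructing primes of $\Phi_n(q)$ are either empty or $\{r\}$, each occurring to exponent at most~$1$ in $\Phi_n(q)$, which gives the three cases of \eqref{LdefC}; for $n=2$, $\gcd(q+1,q-1)\in\{1,2\}$ directly forces $\Phi_2^*(q) = (q+1)/(q+1)_2$. Part \eqref{LdefD} is then bookkeeping: the gcd $f = \gcd(\Phi_n(q),n)$ equals $1$ exactly when no prime obstructs (and then $\Phi_n(q)/f^i = \Phi_n(q)$ trivially), equals $r$ otherwise when $n>2$ (with largest $r$-power dividing $\Phi_n(q)$ being $r^1$ by the exponent bound), and equals $\gcd(q+1,2)$ when $n=2$ (so that $f^i=(q+1)_2$). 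The main obstacle is the exponent bound $v_r(\Phi_n(q))=1$ in the second paragraph: this requires a careful Lifting-the-Exponent argument for odd $r$ together with a separate modular computation for $r=2$.
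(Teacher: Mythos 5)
Your proposal is correct, and it takes a genuinely different route from the paper's proof, chiefly in being self-contained where the paper is not.

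For part \eqref{LdefA}, the paper uses a short elementary argument: any prime $r$ dividing a divisor $m$ of $\Phi_n(q)$ and also dividing $q^k-1$ for some $k<n$ must divide $q^{\gcd(n,k)}-1$ (since $r\mid q^n-1$ and $\gcd(q^n-1,q^k-1)=q^{\gcd(n,k)}-1$), hence divides some $\Phi_\ell(q)$ with $\ell\mid n$, $\ell<n$. You instead appeal to the structural fact that $p\mid\Phi_n(q)$ forces $n=mp^a$ where $m=\mathrm{ord}_p(q)$ and $a\ge0$; this also works but is heavier machinery than (a) really needs. For parts \eqref{LdefC} and \eqref{LdefD}, the difference is more substantive: the paper simply cites Roitman (Proposition~2) or L\"uneburg (Satz~1) for the crucial facts that the non-Zsigmondy prime $p$ must equal the largest prime divisor $r$ of $n$ and that $v_r(\Phi_n(q))=1$. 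You re-derive both from scratch: that $p=r$ follows because $m=\mathrm{ord}_p(q)\mid p-1$ forces every other prime divisor of $n=mp^a$ to be less than $p$; and the exponent bound $v_r(\Phi_n(q))=1$ follows for odd $r$ from Lifting-the-Exponent combined with telescoping $q^{mr^j}-1=\prod_{i=0}^{j}\Phi_{mr^i}(q)$ (modulo $r$-free factors), and for $r=2$ from the direct congruence $\Phi_{2^a}(q)=q^{2^{a-1}}+1\equiv 2\pmod 8$ when $q$ is odd and $a\ge 2$. Your version buys independence from the cited literature, at the cost of invoking LTE and being noticeably longer; the paper's version is cleaner in a context where Roitman and L\"uneburg are already in the bibliography. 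One small presentational point worth making explicit if you were to write this up: in the telescoping step you need $v_r(\Phi_m(q))=v_r(q^m-1)$, which holds because $r\nmid\Phi_d(q)$ for proper divisors $d\mid m$ (as $m=\mathrm{ord}_r(q)$ is minimal); without noting this the telescoping does not obviously isolate the later factors.
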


\begin{remark}\label{Rem}
For $n>2$ the last paragraph of the proof of part~\eqref{LdefC}
shows that $d:=\gcd(\Phi_n(q),\prod_{1\le k<n}(q^i-1))$ equals
$f:=\gcd(\Phi_n(q),n)$. Either $d=f=1$ and $r\nmid\Phi_n(q)$,
of $d=f=r$ and $r\mid\Phi_n(q)$. Thus, part~\eqref{LdefD} simplifies to
$\Phi_n^*(q)=\Phi_n(q)/f$ when $n>2$.
\end{remark}

\begin{proof}
\noindent\eqref{LdefA}
We use the following notation where $m$ is a divisor of $\Phi_n(q)$:
\begin{align*}
  P_n&=\prod_{1\le k<n}(q^k-1), &P_n^{\,\prime}&=\prod_{k\mid n,\,k<n}\Phi_k(q),\\
  d_n(m)&=\gcd(m,P_n), \quad &d^{\,\prime}_n(m)&=\gcd(m,P_n^{\,\prime}).
\end{align*}

Fix a divisor $m$ of $\Phi_n(q)$. We prove that $d_n(m)=1$ holds if and
only if $d^{\,\prime}_n(m)=1$. Certainly $d_n(m)=1$ implies
$d^{\,\prime}_n(m)=1$ as $P_n^{\,\prime}\mid P_n$. Conversely, suppose that $d_n(m)\ne1$.
Then there exists a prime divisor $r$ of $m$
that divides $q^k-1$ for some $k$ with $1\le k<n$. However,
$r\mid\Phi_n(q)\mid (q^n-1)$ and $\gcd(q^n-1,q^k-1)=q^{\gcd(n,k)}-1$, so
$r$ divides $q^{\gcd(n,k)}-1$. Hence $r$ divides $\Phi_\ell(q)$ for some
$\ell\mid\gcd(n,k)$ by Lemma~\ref{lemma:phiq}(\ref{lemma:phiqmoebius}).
In summary, $r\mid d_n(m)$ implies $r\mid d^{\,\prime}_n(m)$,
so $d_n(m)\ne1$ implies $d^{\,\prime}_n(m)\ne1$.

For any divisor~$m$ of $\Phi_n(q)$ we have shown that
$\gcd(m,P_n)=1$ holds if and only if $\gcd(m,P_n^{\,\prime})=1$.
Thus the largest divisor of $\Phi_n(q)$ coprime to $P_n^{\,\prime}$
is equal to the largest such divisor which is coprime to $P_n$,
and this is $\Phi^*_n(q)$ by
Definition~\ref{def:phistar}.


\noindent\eqref{LdefC}
First consider the case $n=2$. Now $d:=d_2(\Phi_2(q))=\gcd(q+1, q-1)$
divides~2. Indeed, $d=1$
for even $q$, and $d=2$ for odd $q$. In both cases, $(q+1)/(q+1)_2$ is the
largest divisor of $q+1$ coprime to $q-1$. Thus $\Phi^*_2(q)=(q+1)/(q+1)_2$
by Definition~\ref{def:phistar}.

Assume now that $n>2$. Let $d=\gcd(\Phi_n(q),P_n)$ where
$P_n=\prod_{1\le k<n}(q^k-1)$. If $d=1$, then
$\Phi_n^*(q) = \Phi_n(q)$ by Definition~\ref{def:phistar}. Suppose that
$d>1$ and~$p$ is a prime divisor of~$d$. Then the order of $q$ modulo~$p$ is
less than $n$, and Feit~\cite{Feit} calls~$p$ a non-Zsigmondy prime.
It follows from~\cite[Proposition~2]{R} or L\"uneburg~\cite[Satz~1]{L}
that the prime $p$ divides $\Phi_n(q)$ exactly once, and $p=r$ is the largest
prime divisor of $n$. Thus we see that $\gcd(\Phi_n(q)/r,P_n)=1$
and $\Phi_n^*(q) = \Phi_n(q)/r$ by Definition~\ref{def:phistar}.
This proves~\eqref{LdefC}.

To connect with part~\eqref{LdefD}, we prove when $n>2$ that $d$ equals
$f:=\gcd(\Phi_n(q),n)$.
Indeed, we prove Remark~\ref{Rem} that either $d=f=1$ and $r\nmid\Phi_n(q)$,
or $d=f=r$ and $r\mid\Phi_n(q)$.
If $d=1$, then $\Phi_n^*(q) = \Phi_n(q)$ and a prime divisor~$p$
of $\Phi_n^*(q)$ satisfies $p\equiv1\pmod n$ by
Lemma~\ref{lemma:phiq}(\ref{lemma:phiq1modn}) and hence
$p\nmid n$. Thus $f=1$ and $r\nmid\Phi_n(q)$ since $r\mid n$.
Conversely, suppose that $d>1$. The previous paragraph shows that
$d=r$ and $r^2\not\mid\Phi_n(q)$. Thus $r\mid f$.
Let $p$ be a prime dividing $f=\gcd(\Phi_n(q),n)$.
Since $\Phi_n(q) \mid (q^n-1)$,
we have $p\mid(q^n-1)$, and hence $p\not\mid\Phi^*_n(q)$ by
Lemma~\ref{lemma:phiq}(\ref{lemma:phiq1modn}).
Thus $p$ divides $P_n$ by Definition~\ref{def:phistar}, and
hence $p$ divides $d=\gcd(\Phi_n(q),P_n)$. However, $d=r$ and so
$p=r=f$, and in this case $r\mid\Phi_n(q)$.  

\eqref{LdefD} By part~\eqref{LdefC} and the last paragraph of the
proof of~\eqref{LdefC}, Definition~\ref{def:phistar} is
equivalent to Hering's definition~\cite{Hering} in part~\eqref{LdefD}.
\end{proof}

\begin{remark}\label{REM}
When~$q$ is a prime power, there is a fourth equivalent definition:
\newline
$\Phi_n^*(q)$ is the order of the largest subgroup of
$\F_{q^n}^\times$ (the multiplicative group of $q^n-1$ nonzero elements
of $\F_{q^n}$) that intersects trivially all the subgroups
$\F_{q^d}^\times$ for $d\mid n$, $d<n$. 
\begin{proof}
The correspondence $H\leftrightarrow |H|$ is a bijection between the
subgroups $H$ of the cyclic group~$\F_{q^n}^\times$ and the divisors of
$q^n-1$. Suppose $d\mid n$. Note that $H\cap \F_{q^d}^\times=\{1\}$
holds if and only if $\gcd(|H|,q^d-1)=1$ as $\F_{q^n}^\times$ is cyclic.
Thus there exists a unique subgroup $H$ whose order $m$ is maximal
subject to $H\cap \F_{q^d}^\times=\{1\}$ for all $d\mid n$, $d<n$.
Hence $m$ is the largest divisor of $q^n-1$ satisfying $\gcd(m,q^d-1)=1$
for all $d\mid n$, $d<n$.
Since $q^n-1=\prod_{d\mid n}\Phi_d(q)$ and $\Phi_d(q)\mid q^d-1$, we see that
$m\mid \Phi_n(q)$. It follows from Lemma~\ref{Ldef}~\eqref{LdefA} that $\Phi^*_n(q)=m$.
\end{proof}
\end{remark}

\section{The polynomial bound \texorpdfstring{$\Phi_n(q)\le c n^k$}{}}\label{M}

As we will discuss in Section~\ref{M*}, the number of pairs $(2,q)$ with $q$
a prime power satisfying $\Phi_2(q)\leq c 2^k$ is potentially infinite.
We therefore deal here with pairs $(n,q)$ for $n\ge3$. 
Given positive constants $c$ and $k$, we now
describe an algorithm for determining all pairs in the set
\[
  M(c,k) := \{(n,q)\in \Z\times\Z\mid n\ge3, q\ge2\textup { a prime power, and }\Phi_n(q) \leq c n^k\}.
\]

\begin{algorithm}{\texorpdfstring{$M(c,k)$}{}}\label{alg:M}
\vskip1.5mm
\noindent{\bf Input:} Positive constants $c$ and $k$.\newline
\noindent{\bf Output:} The finite set $M(c,k)$.\newline

\def\Def#1{[#1]}

\begin{enumerate}[{\bf {\bf\ref{alg:M}.}1}]
\item \Def{Definitions} Set $s := 2+\log_2(c)$, $t := (s+k)/\ln(2)$,
$u := k/\ln(2)^2$ and $b := e^{1-t/(2u)}$ and define for $x\geq 3$
the function $g(x) := x-s-t\ln(x)-u\ln(x)^2$ where $\ln(x)=\log_e(x)$.
Note that $g(x)$ has derivative $g'(x) := 1-t/x-2u\ln(x)/x$.
\item \Def{Initialise} Set $n:=3$ and set $M(c,k)$ to be the empty set.
\item \Def{Termination criterion} If $n > b$ and  $g(n)>0$ and $g'(n)>0$ then return $M(c,k)$.
\item \Def{For fixed $n$, find all $q$}  If $g(n) < 0$ and $2^{\varphi(n)-2} < cn^k$ then compute
$\Phi_n(X)$ and find the smallest
prime power $\tilde q$ such that $\Phi_n(\tilde q) > cn^k$;
add $(n,q)$ to $M(c,k)$ for all prime powers $q<\tilde q$.
\item \Def{Increment and loop} Set $n := n+1$ and go back to step~\ref{alg:M}.3.
\end{enumerate}
\end{algorithm}

\proc{Proof of correctness.}
Algorithm~\ref{alg:M} starts with $n=3$ and it continues to increment $n$.
We must prove that it does terminate at step~\ref{alg:M}.3, and that it
correctly returns $M(c,k)$. Note first that for fixed $n$
the values  $\Phi_n(q)$ are strictly increasing with $q$ by
Lemma~\ref{lemma:phiq}(\ref{lemma:phiqincreasing}).
Thus it follows from Lemma~\ref{lemma:phiq}\eqref{lemma:phiqeuler}
and \eqref{lemma:phiqbound} that
\[
  \Phi_n(q)\ge\Phi_n(2) > \frac{2^{\varphi(n)}}{4}=2^{\varphi(n)-2}
  \geq 2^{n/(\log_2(n)+1) - 2}.
\]
Consider the inequality $2^{n/(\log_2(n)+1) - 2} \ge c n^k$. Taking base-2
logarithms shows
\begin{align*}
  n&\ge(k\log_2(n)+\log_2(c)+2)(\log_2(n)+1)\\
   &=(\log_2(c)+2)+(k+\log_2(c)+2)\log_2(n)+k\log_2(n)^2\\
   &=s+t\ln(n)+u\ln(n)^2
\end{align*}
where the last step uses $\log_2(n)=\ln(n)/\ln(2)$ and the definitions in 
step~\ref{alg:M}.1.
In summary, $2^{n/(\log_2(n)+1) - 2} \ge c n^k$
is equivalent to $g(n) \ge 0$ with $g(n)$ as defined in step~\ref{alg:M}.1.

The inequalities above show that the conditions $g(n) < 0$ and
$2^{\varphi(n)-2} < cn^k$, which we test in step~\ref{alg:M}.4, are necessary for
$\Phi_n(2) \leq c n^k$. We noted above that for fixed $n$ the values
of $\Phi_n(q)$ strictly increase with~$q$.
Thus (if executed for a particular $n$) step~\ref{alg:M}.4 correctly adds to $M(c,k)$ all pairs $(n,q)$
for prime powers~$q$ such that $\Phi_n(q) \le cn^k$.

It remains to show (i) that the algorithm terminates, and (ii) that 
the returned set $M(c,k)$ contains \emph{all} pairs $(n,q)$ such that 
 $\Phi_n(q) \le cn^k$. 
The second derivative of $g(x)$ equals $g''(x) = (t- 2u(1-\ln(x)))/x^2$.
Since $u>0$ this shows that $g''(x) > 0$ if and only if $x > b = e^{1-t/(2u)}$.
Thus $g'(x)$ is increasing for all $x>b$.
Because $x$ grows faster than any power of $\ln(x)$ we have that
$g(x) > 0$ and $g'(x) > 0$ for~$x$ sufficiently large. Thus there exists a (smallest) integer
$\tilde n$ fulfilling the conditions in step~\ref{alg:M}.3, that is,
$\tilde n > b$, $g(\tilde n)>0$ and $g'(\tilde n)>0$. The algorithm terminates when
step~\ref{alg:M}.3 is executed for the integer $\tilde n$.
To prove that the returned set $M(c,k)$ is complete, we verify that,
for all $n\ge\tilde n$, there is no prime power $q$ such that $\Phi_n(q)\le cn^k$. 
Now, for all $x \geq \tilde n$, we have
$x >b$ so that $g'(x)$ is increasing for $x \geq
\tilde n$, and so $g'(x)\ge g'(\tilde n)>0$, whence
$g(x)$ is increasing for $x \geq \tilde n$. In particular,
$n \geq \tilde n$ implies that $g(n)\ge g(\tilde n) > 0$ and so (from our displayed computation above),
for all prime powers $q$, $\Phi_n(q)\ge\Phi_n(2) > c n^k$. Thus there are no pairs $(n,q)\in M(c,k)$
with $n\ge\tilde n$, so the returned set $M(c,k)$ is complete.
\ep\mb

\section{Determining when \texorpdfstring{$\Phi_n^*(q)\le c n^k$}{}}
\label{M*}

We describe an algorithm to determine all pairs $(n, q)$, with $n,q\ge2$ and
$q$ a prime power, such that the value
$\Phi_n^*(q)$ is bounded by a given polynomial in $n$, say $f(n)$.
For $n\ge3$ the algorithm
determines the finite list of possible $(n,q)$. For $n=2$ the output is split
between a finite list which we determine, and a potentially infinite (but very
restrictive) set of prime powers $q$ of the form $2^am-1$ where $m\le f(2)$
is odd.
Table~\ref{tab:q=2} lists the prime powers $q\leq 5000$ such that
$\Phi_2^*(q)\le 16$;  we see that some proper powers occur, though
the majority of the entries are primes. 
For example, if  $\Phi_2^*(q) =1$ then
the prime powers $q$ of the form $2^a-1$, must be a prime by~\cite{Z}. Such
primes are called Mersenne primes.

The set $M(c,k)$ of all pairs $(n,q)$ satisfying $\Phi_n(q)\le cn^k$
is finite by Lemma~\ref{lemma:phiq}(\ref{lemma:phiqbound}). By
contrast the set of pairs $(n,q)$ satisfying $\Phi^*_n(q)\le cn^k$ may
be infinite as $\Phi^*_2(q)=m$, $m$ odd, may have infinitely many (but
highly restricted) solutions for $q$.
Algorithm~\ref{alg:Mstarnbig} computes the following set (which we see
below is a finite set)
\[
  M^*_{\ge3}(c,k) = \left\{ (n,q)\in \Z\times\Z\mid n\ge3, q\ge2\textup { a prime power, and }\Phi^*_n(q) \leq c n^k\right\}.
\]

\begin{algorithm}{\texorpdfstring{$M^*_{\ge3}(c,k)$}{}}\label{alg:Mstarnbig}
\vskip1.5mm
\noindent{\bf Input:} Positive constants $c$ and $k$.\newline
\noindent{\bf Output:} The finite set $M^*_{\ge3}(c,k)$.\newline

\begin{enumerate}[{\bf {\bf\ref{alg:Mstarnbig}.}1}]
  \item 
  Compute $M(c,k+1)$ with Algorithm~$\ref{alg:M}$. 
  \item 
  Initialise $M^*_{\ge3}(c,k)$ as the empty set. For all $(n,q)\in 
  M(c,k+1)$ with $n \geq 3$ check if $\Phi^*_n(q) \leq c n^k$. If yes,
  add $(n,q)$ to $M^*_{\ge3}(c,k)$.
  \item 
  Return $M^*_{\ge3}(c,k)$.
\end{enumerate}
\end{algorithm}

\proc{Proof of correctness.}
We need to show that all $M^*_{\ge3}(c,k) \subseteq M(c,k+1)$. This
follows from Lemma~\ref{Ldef}\eqref{LdefC} which shows that
$n \Phi^*_n(q) \geq \Phi_n(q)$ whenever $n \geq 3$.
\ep\mb

\noindent\textsc{Case $n=2$.} We treat the case $n=2$ separately as the
classification has a finite part and a potentially infinite part.
Suppose $q$ is odd and $\Phi_2^*(q)=\frac{q+1}{2^a}=m\le cn^k$
where $m$ is odd by Lemma~\ref{Ldef}\eqref{LdefC}. Then solving for~$q$
gives $q=2^am-1$.

If $m=1$ then $q=2^a-1$ is a (Mersenne) prime as remarked in the first
paragraph of this section. 
Lenstra-Pomerance-Wagstaff conjectured~\cite{LPW} that there are
infinitely many   Mersenne primes, and the asymptotic density of the set
$\{a<x\mid 2^a-1\textrm{ prime}\}$ is ${\rm O}(\log x)$. For fixed $m$
with $m>1$, the number of prime powers of the form $2^am-1$ may also be
infinite (although in this case we cannot conclude that $a$ must be prime).
The set
\[
  M_2^*(c,k)=\{(2,q)\mid  \Phi^*_n(q) \leq c 2^k \textup{ and $q$ is a prime power} \}
\]
is a disjoint union of three subsets:
\begin{align*}
  R(c,k)&:=\{(2,q)\mid (2,q) \in M_2^*(c,k)\text{ and } q\not\equiv{3}\!\!\!\!\pmod4 \},\\
  S(c,k)&:=\{(2,q)\mid (2,q) \in M_2^*(c,k)\text{ and }q\equiv 3\!\!\!\pmod4 \text{ and } q \text{ not prime }  \},\\
  T(c,k)&:=\{(2,q)\mid (2,q) \in M_2^*(c,k)\text{ and } q \equiv 3\!\!\!\pmod4\textup{
    and $q$ prime}\}\\
\end{align*}
As the set  $T(c,k)$ may be infinite
Algorithm~\ref{alg:Mstarn2} below takes as input a constant $B>0$ and
computes  the finite subset $T(c,k,B) =\{ (2,q)\mid q \in  T(c,k)\textup{ and } q\le B\}$ of $M_2^\ast(c,k).$  Table~\ref{tab:q=2} has $n=2$ and $q\le
5000$, so we input $B=5000$.

\begin{algorithm}{\texorpdfstring{$M^*_2(c,k,B)$}{}}\label{alg:Mstarn2}
\vskip2mm
\noindent{\bf Input:} Positive constants $c, k$ and $B$.\newline 
\noindent{\bf Output:} The (finite) set $R(c,k)\cup S(c,k)\cup T(c,k,B)$, see the notation above.\newline

\begin{enumerate}[{\bf {\bf\ref{alg:Mstarn2}.}1}]
  \item 
  Initialise each of $R(c,k), S(c,k), T(c,k,B)$ as the empty set.
  \item 
  Add $(2,q)$ to $R(c,k)$ when $q$ is a power of $2$ with
  $q+1 \leq c 2^k$.
  \item 
  Add $(2,q)$ to $R(c,k)$ when $q$ is a prime power, $q \equiv 1
  \pmod{4}$ and $(q+1)/2 \leq c 2^k$.
  \item For all primes $p\equiv 3 \pmod{4}$ with
  $p \le B$ and $(p+1)/(p+1)_2\le c2^k$ add $(2,p)$ to $T(c,k,B)$.
  For all primes $p\equiv 3 \pmod{4}$ (where $p\le c2^{k-1}$ is allowed) and
  all odd $\ell\ge3$ with $\sum_{i=0}^{\ell-1}(-p)^i\leq c 2^k$
  add   $(2,p^\ell)$ to $S(c,k)$ if $\Phi_2^*(p^\ell)\leq c 2^k$.
  \item Return $R(c,k)\cup S(c,k)\cup T(c,k,B)$.
\end{enumerate}
\end{algorithm}

\proc{Proof of correctness.}
By Lemma~\ref{Ldef}\eqref{LdefC},
$\Phi^*_2(q) = \Phi_2(q) = q+1$ when $q$ is an even prime power and
$\Phi^*_2(q) = \Phi_2(q)/2 = (q+1)/2$ if $q \equiv 1 \pmod{4}$. It is clear that
steps~\ref{alg:Mstarn2}.2 and~\ref{alg:Mstarn2}.3 find 
all pairs $(2,q) \in R(c,k)$ with $q \not\equiv 3 \pmod{4}$, and there
are finitely many choices for $q$.

Any prime power $q\equiv 3 \pmod{4}$ is an odd power $q=p^\ell$ of a prime
$p \equiv 3 \pmod{4}$. Write $q+1 = 2^a m$ with $m$ odd and $a\ge2$,
then $\Phi_2^*(q) = m$.  If $q$ is a prime $(2,q) \in T(c,k,B)$ if and only if 
$q\le B$ and $\Phi_2^*(q) \leq c 2^k$, so step~\ref{alg:Mstarn2}.4 adds such pairs.
This is because, when $q\equiv 3\pmod{4}$ and $q\leq B$ we have,
by Lemma~\ref{Ldef}\eqref{LdefC}, that $\Phi_2^*(q)=(q+1)/2 \leq B$. 
Suppose $q$ is not a prime, that is $\ell>1$. Then we have the 
factorisation $q+1 = (p+1)(\sum_{i=0}^{\ell-1}(-p)^i)$ where
the second factor is odd and so divides $m$. Since
$2p^{\ell-2}\le p^{\ell-2}(p-1)<\sum_{i=0}^{\ell-1}(-p)^i\le m$
and we require $m\le c2^k$, we see $p^{\ell-2}\le c2^{k-1}$.
Since there are finitely many solutions to $p^{\ell-2}\le c2^{k-1}$ with
$\ell>1$ odd, $S(c,k)$ is a finitely set, and step~\ref{alg:Mstarn2}.4
correctly computes $S(c,k)$. Finally, the disjoint union
$R(c,k)\cup S(c,k)\cup T(c,k,B)$ is the desired output set.
\ep\mb

\proc{Proofs of Theorems~\textup{\ref{the:one}}
and~\textup{\ref{the:two}}.}
Theorem~\ref{the:one}(a) follows from the correctness of
Algorithms~\ref{alg:M} and~\ref{alg:Mstarnbig}, and Theorem~\ref{the:one}(b)
uses these algorithms with $(c,k)=(1,4)$. Similarly, Theorem~\ref{the:two}
uses these algorithms with $(c,k)=(16,7)$. It is shown that
in the penultimate paragraph of the proof of Theorem~\ref{the:Ac} that
$\Phi^*_n(q)\le16n^7$ holds for $n\ge 4$.
If $n=3$ and $1\le i\le4$, then $in+1$ is prime for $i=2,4$,
and again $\Phi_n^\ast(q) \le 7\cdot13 \le 16n^7$ holds. We then search
the (rather large) output set for the pairs $(n,q)$ for which
$\Phi^*_n(q)$ has the prescribed prime factorisation.
\Magma~\cite{BCP} code generating the data for Tables~1--5 mentioned
in Theorems~\ref{the:one} and~\ref{the:two} is available at~\cite{G}. 
\ep\mb

\section{The tables}\label{sec:tables}

By Lemma~\ref{lemma:phiq}\eqref{lemma:phiq1modn} the prime factorisation
of $\Phi_n^*(q)$ has the form $\prod_{i\ge1}(in+1)^{m_i}$ where $m_i=0$ if
$in+1$ is not a prime. It is convenient to encode this prime factorisation as
$\Phi_n^*(q)=\prod_{i\in I}(in+1)$ where $I$ is a multiset,
and for each $i\in I$ the prime divisor $in+1$ of $\Phi_n^*(q)$ is
repeated $m_i$ times in $I=I(n,q)$.
For example, $\Phi^*_4(8)=65=(4+1)(3\cdot 4+1)$ so $I(4,8)=\multiset{1,3}$
and $\Phi^*_5(3)=121=(2\cdot 5+1)^2$ so $I(5,3)=\multiset{2,2}$.
To save space, we omit the double braces in our tables and denote the empty
multiset (corresponding to $\Phi^*_6(2)=1$) by `$-$'.
All of our data did not conveniently fit into Table~\ref{tab:onea}, so
we created subsidiary tables \ref{tab:q=2}, \ref{tab:six}, \ref{tab:oneb}
for $n=2$, $n=6$ and $n\ge19$, respectively.
For $n$ and $q$ such that $\Phi_n^*(q)\le n^4$
Tables~\ref{tab:onea} and  \ref{tab:oneb}  
record in row $n$ and column $q$ the multiset $I(n,q).$
The tables are the output from Algorithm~\ref{alg:Mstarnbig}
with  $c=1$ and $k = 4$.

Table~\ref{tab:two} exhibits data for two different theorems.
For Theorem~\ref{the:two} we record the triples $(n,q,I)$
for which $n\ge3$ and $\Phi_n^*(q)$ has prime
factorisation $\prod_{i \in I}(i\,n+1)$ where
$I\subseteq\multiset{1, 1, 1, 2, 3, 4}.$ 
For Theorem~\ref{the:Ac} we also list the possible degrees $c$ that can arise,
namely $c_0\le c\le c_1$. 

\begin{center}
\begin{table}[!ht]
{\small
\begin{tabular}{c|ccccccccccc}
\toprule
$n\backslash q$ & $2$ & $3$ & $4$ & $5$ & $7$ & $8$ & $9$& $11$ & $13$ & $17$ & $19$ \\ 
\midrule
2 & \textup{Table~\ref{tab:q=2}}&&&&&&&&\\
3 &  2& 4& 2& 10&6& 24  & & &  20\\
4 & 1&1&4&3& 1,\;1& 1,\;3 &  10 &  15&1,\;4&1,\;7&45\\
5 & 6&2,\; 2&2,\;6&\\
\hline
6 & \textup{Table~\ref{tab:six}}&&&&&&&&\\
7 & 18&156&\\
8 & 2&5&32&39&150& &2,\;24\\
9 & 8& 84& 2,\;8&\\
\hline
10 &1& 6&4&52&1,\;19&1,\;33 &118\\
11 & 2,\;8&\\
12 & 1& 6&20&50& 1,\;15&3,\;9 & 540& 1,\;93\\
13 & 630&\\
\hline
14 & 3&39&2,\;8& 2,\;32&\\
15 &  10&304&10,\;22&\\
16 &  16&1,\;12&\\
18 & 1& 1,\;2&2,\;6&287& & 4845&\\
$\ge19$ & \textup{Table~\ref{tab:oneb}}&&&&&&&&\\
\bottomrule
\end{tabular}
\caption{Triples $(n,q,I)$ with $\Phi^*_n(q)\leq n^4$ and prime factorisation~$\Phi^*_n(q)=\prod_{i\in I} (in+1) .$}\label{tab:onea}
}
\end{table}
\end{center}

\begin{center}
\def\kk#1{\kern-2.5pt{#1}\kern-2.5pt}
\begin{table}[!ht]
{\small
\begin{tabular}{l|cccccccccccccccc}
\toprule
$q$ & 2 &  3 & $2^2$ & 5 & 7 & $2^3$ & $3^2$& 11 & 13 & 17 & 19 &23&$5^2$&$3^3$&29&31\\ \hline
 $q$ & \kk{43}&\kk{47}&\kk{59}&\kk{71}&\kk{79}&\kk{103}&\kk{127}&\kk{191}&\kk{223}&\kk{239}&\kk{383}&\kk{479}&\kk{1151}&\kk{1279}&\kk{1663}&\kk{3583}\\
\bottomrule
\end{tabular}
\caption{Prime powers $q\le 5000$ with $\Phi^*_2(q)\le 2^4=16$,
 see Remark~\ref{rem:one}.}\label{tab:q=2}
}
\end{table}
\end{center}

\begin{center}
\begin{table}[!ht]
{\small
\begin{tabular}{l|ccccccccccc}
\toprule
$q$ & $2$ &  $3$ & $4$ & $5$ & $7$ & $8$ & $9$& $11$ & $13$ & $16$ & $17$\\ 
$I$ &  $-$ & 1 & 2 & 1 & 7 & 3 & 12 & 6 & 26 & 40 & 1,\,2 \\
\hline
$q$&   $19$ &$23$& $25$ & $27$ & $29$ & $31$ & $32$& $41$ & $47$ &
 $53$ & $59$ \\
$I$& 1,\,1,\,1 & 2,\,2 & 100 & 3,\,6 & 45 & 1,\,1,\,3& 55 & 91 & 1,\,17 &
153 & 1,\,27 \\
\bottomrule
\end{tabular}
\caption{Pairs $(q,I)$ with $\Phi^*_6(q)\leq 6^4$ and prime factorisation~$\Phi^*_6(q)=\prod_{i\in I} (in+1)$ where $-$ means $\multiset{\,}$.}\label{tab:six}
}
\end{table}
\end{center}

\begin{center}
\begin{table}[!ht]
{\small
\begin{tabular}{c|cccc||c|cc||c|cc}
\toprule
  $n\backslash q$ & $2$ & $3$ & $4$ & $5$& $n\backslash q$ & 2& $3$ & $n\backslash q$ & $2$ \\
\hline
20 & 2&59&3084& &33 &  18166&   & 50 &  5,\;81\\
21 &  16& & & &34 &  1285& &  54 &  1615\\
22 &  31& 3,\;30&& & 36 & 1,\;3&14742& 60 &  1,\;22\\
\hline
24 &  10& 270&4,\;28&& 38 &  4599& & 66 &  1,\;316\\
26 & 105& 15330& & &40 &  1542& & 72 &  6,\;538\\
27 & 9728& & & &42 &  129& 1,\;54& 78 &  286755\\
\hline
28 & 1,\;4&1,\;589& & &44 &  9,\;48& & 84 &  17,\;172\\
30 & 11&1,\;9& 2,\;44& 2,\;254 &  46 &  60787& & 90 &  209300\\
32 &  2048& &  & &48 &  2,\;14&&\\
\bottomrule
\end{tabular}
\caption{All $(n,q,I)$ with $n\ge19$, $\Phi^*_n(q)\leq n^4$, and~factorisation~$\Phi^*_n(q)=\prod_{i\in I} (in+1).$}\label{tab:oneb}
}
\end{table}
\end{center}

\begin{center}
\begin{table}[!ht]
{\small
\begin{tabular}{c|c|c|cc||c|c|c|cc||c|c|c|cc}
\toprule
 $\,n\,$ & $\,q\,\,$ & $\,\,I\,\,$ & $\,c_0$ & $c_1\,$ & 
 $\,n\,$ & $\,q\,\,$ & $\,\,I\,\,$ & $\,c_0$ & $c_1\,$ & 
 $\,n\,$ & $\,q\,\,$ & $\,\,I\,\,$ & $\,c_0$ & $c_1\,$ \\
\hline
3&2&2& & &   4&13&1,\,4&17&17&  8&2&2&17&34\\ \cline{11-15}
3&3&4& & &   4&47&1,\,3,\,4&17&17&  10&2&1&15&42\\ \cline{6-10}
3&4&2& & &   6&2&$-$&15&26& 10&4&4&41&42\\ \cline{11-15}
3&9&2,\;4& & &  6&3&1&15&25&    12&2&1&15&50\\ \cline{11-15}
3&16&2,\;4& & &  6&4&2&15&26&  14&2&3&43&58\\ \cline{1-5}\cline{11-15}
4&2&1&15&18&  6&5&1&15&25&   18&2&1&19&74\\
4&3&1&15&18&  6&8&3&19&26&   18&3&1,\;2&37&73\\ \cline{11-15}
4&4&4&17&18&  6&17&1,\;2&15&25&  20&2&2&41&82\\ \cline{11-15}
4&5&3&15&17&  6&19&1,\,1,\,1&15&25&  28&2&1,\;4&113&114\\ \cline{11-15}
4&7&1,\;1&15&17&  6&31&1,\,1,\,3&19&25& 36&2&1,\;3&109&146\\ \cline{11-15}
4&8&1,\;3&15&18&  &&&&&  &&&&\\
\bottomrule
\end{tabular}
\caption{
For Theorem~\ref{the:two} we list all $(n,q,I)$ where $n\ge3$
and $\Phi_n^*(q)$ has prime factorisation
$\prod_{i\in I} (in+1)$ with $I\subseteq\multiset{1,1,1,2,3,4}$. For
Theorem~\ref{the:Ac} we also list the possible degrees $c$ where
$c_0\le c\le c_1$ and in this case we must have $n\ge4$. Here $-$
denotes the empty multiset.}\label{tab:two}  
}
\end{table}
\end{center}

\section{An Application}\label{sec:Ac}

Various studies of configurations in finite projective spaces have
involved a subgroup $G$ of a projective group $\PGL(d,q)$ (or
equivalently, a subgroup of $\GL(d,q)$) with order divisible by
$\Phi^*_n(q)$ for certain $n, q$.  This situation was analysed in
detail by Bamberg and Penttila~\cite{BambergPenttila} for the cases
where $n > d/2$, making use of the classification in \cite{GPPS}. In
turn, Bamberg and Penttila applied their analysis to certain
geometrical questions, in particular proving a conjecture of Cameron
and Liebler from 1982 about irreducible subgroups with equally many
orbits on points and lines \cite[Section~8]{BambergPenttila}.
In their group theoretic analysis Hering's theorem
\cite[Theorem~3.9]{Hering} was used repeatedly, notably to deal
with the `nearly simple cases' where $G$ has a normal subgroup $H$
containing $\mathrm{Z}(G)$ such that $H$ is absolutely irreducible,
$H/\mathrm{Z}(G)$ is a nonabelian simple group, and
$G/\mathrm{Z}(G)\leq\mathrm{Aut}(H/\mathrm{Z}(G))$.
Incidentally, the missing cases $(n,q)=(2,17)$ and $(2,71)$ mentioned
in Remark~\ref{rem:one} do not
affect the conclusions in~\cite{BambergPenttila}.

To  study other  related geometric  questions we  have  needed similar
results which  allow the parameter  $n$ to be  as small as $d/4$.  We give
here an example of how our extension of Hering's results might be used
to deal with  nearly simple groups in this more  general case where no
existing  general classifications  are applicable.  
For example, there are several theorems about translation planes that include
restrictive hypotheses such as two-transitivity~\cite{BJJMa, BJJMb, BJJMc}.
In order to remove some of these restrictions, we require results similar
to Theorem~\ref{the:Ac} for all nearly simple groups.
For  simplicity we
now consider representations of the alternating or symmetric groups
of degree $c\geq 15$ with $\Phi^*_n(q)\mid c!$
and, as we see below, $c-1\ge n\ge (c-2)/4$.

\begin{theorem}\label{the:Ac}
Let $G\le\GL(d,q)$ where $G\cong\Alt(c),\Sym(c)$, for some  $c\geq 15$,
and suppose that $\Alt(c)$  acts  absolutely irreducibly on $(\F_q)^d$
where $q$ is a power of the prime~$p$. Suppose
$\Phi^*_n(q)$ divides $c!$ for some $n\geq d/4$.
Then $n\ge4$, $d=c-\delta(c,q)$ where $\delta(c,q)$ equals $1$
if $p\nmid c$,  and $2$ if~$p\mid c$, also $c_0 \le c \le c_1$, 
and $\Phi^*_n(q)$ has prime factorisation $\prod_{i\in I}(in+1)$, where all
possible
values for $(n,q,I, c_0, c_1)$ are listed in \emph{Table~$\ref{tab:two}$}.
\end{theorem}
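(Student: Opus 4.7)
The plan is to combine the classification of minimum-dimensional faithful absolutely irreducible representations of $\Alt(c)$ with the numerical constraints coming from Lemma~\ref{lemma:phiq} and $\Phi^*_n(q) \mid c!$, and then invoke Theorem~\ref{the:two}.

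First I would show that $d = c - \delta(c,q)$. For $c \ge 15$ the minimum dimension of a faithful absolutely irreducible $\Alt(c)$-module in characteristic~$p$ is $c - \delta(c,q)$, realised by the fully deleted permutation module, and the next smallest such dimension $M(c)$ satisfies $M(c) \ge \binom{c-1}{2} - 1$ by the modular classifications of James, Wales and Kleshchev. By Lemma~\ref{lemma:phiq}\eqref{lemma:phiq1modn}, every prime divisor $r$ of $\Phi^*_n(q)$ has the form $r = in+1$ with $i \ge 1$, hence $r \ge n+1 \ge d/4 + 1$. Since $r \mid c!$ forces $r \le c$, if $d \ge M(c)$ then $r \ge M(c)/4 + 1 > c$ for $c \ge 15$; combined with $\Phi^*_n(q) \ge \Phi_n(q)/n > 1$ from Lemma~\ref{Ldef}\eqref{LdefC} and Lemma~\ref{lemma:phiq}\eqref{lemma:phiqbound}, this is a contradiction. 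So $d = c - \delta(c,q)$, and $n \ge d/4 = (c - \delta)/4$ yields $c \le 4n + \delta \le 4n + 2$ and, since $c \ge 15$, also $n \ge 4$.

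Next I would constrain the prime factorisation. From $r = in+1 \le c \le 4n+2$ we obtain $i \in \{1,2,3,4\}$. For $i \ge 2$, $r > c/2$, so Legendre's formula bounds its multiplicity in $c!$ by one. For $i = 1$, $c/(n+1) < 4$ and $(n+1)^2 > c$, so the multiplicity of $n+1$ in $c!$ is at most three. Hence $\Phi^*_n(q) = \prod_{i=1}^{4}(in+1)^{m_i}$ with $m_1 \le 3$ and $m_2, m_3, m_4 \le 1$, and so $\Phi^*_n(q) \le (n+1)^3(2n+1)(3n+1)(4n+1) \le 16 n^7$ for $n \ge 4$. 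Theorem~\ref{the:two} then lists the admissible triples $(n, q, I)$.

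Finally, for each such triple I would determine the range $c_0 \le c \le c_1$ by imposing: (i) $c \le 4n + \delta$ (upper bound from $n \ge d/4$); (ii) every prime $r = in+1$ appearing in $I$ satisfies $r \le c$ (lower bound); (iii) if multiplicity $m_i \ge 2$ is prescribed, the $r$-adic valuation of $c!$ is at least $m_i$, which further restricts~$c$. A direct case check over the entries of Table~\ref{tab:two} delivers the columns $c_0$ and $c_1$. The main obstacle is Step~1: a clean, uniform-in-$p$ statement of the classification of low-dimensional modular representations of $\Alt(c)$, and careful handling of absolute irreducibility at the boundary $p \mid c$ where $\delta = 2$; this is classical in characteristic zero (Rasala) but delicate in the modular setting.
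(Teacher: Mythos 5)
Your proposal follows essentially the same route as the paper: use James's classification of low-dimensional modular $\Alt(c)$-modules to force $d=c-\delta(c,q)$, extract primes $r=in+1\mid c!$ via Lemma~\ref{lemma:phiq}(\ref{lemma:phiq1modn}) to get $i\le 4$ and multiplicity bounds $m_1\le 3$, $m_2,m_3,m_4\le 1$, deduce $\Phi_n^*(q)\le 16n^7$, and then read off the finite list. Your second-smallest dimension bound $\binom{c-1}{2}-1$ differs slightly from the paper's $c(c-5)/2$ (James's Theorem~7), but both suffice for $c\ge 15$.

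There is, however, a genuine small gap. Your claim that $\Phi^*_n(q) \ge \Phi_n(q)/n > 1$ is not correct in general: for $(n,q)=(6,2)$ we have $\Phi_6(2)=3$ and $\Phi^*_6(2)=1$, so $\Phi_n(q)/n=1/2<1$. In case~(i), $n$ can be as small as $4$, so $(n,q)=(6,2)$ is in play (for example $c$ in the range $15$--$26$), and your argument that "let $r$ be the largest prime divisor of $\Phi^*_n(q)$" breaks when $\Phi^*_n(q)=1$. The paper handles this by invoking Camina--Whelan's Theorem~3.23 (essentially Bang--Zsigmondy): $\Phi^*_n(q)>1$ for all $n>2$ except $(n,q)=(6,2)$, and that exceptional pair, with $I=\multiset{\,}$, is already recorded in Table~\ref{tab:two}. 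You need this separate step; the bound $\Phi_n(q)/n>1$ does not deliver it. (For case~(ii), where $n$ is forced to be large, the issue does not arise and your contradiction goes through.)
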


\begin{proof}
The smallest and the second smallest dimensions for $\Alt(c)$ and
$\Sym(c)$ modules over $\F_q$ are very roughly, $c$ and $c^2/2$ respectively.
The precise statement below follows from
James~\cite[Theorem~7]{J}, where the dimension formula $(\ast)$ on~p.\;420
of~\cite{J} is used for part~(ii).
Since $c\geq  15$, these results show that either:
\begin{enumerate}[{\rm (i)}]
  \item $(\F_q)^d$ is the fully deleted permutation module for $\Alt(c)$ with
    $d=c-\delta(c,q)$, or
  \item $d\geq c(c-5)/2$.
\end{enumerate}

In particular, since $c\ge15$ and $n\ge d/4$, we have $n\ge 4$.
Since $n>2$ it follows from Theorem~3.23 of~\cite{CW} that
$\Phi_n^*(q) > 1$ except
when $n=6$ and $q=2$. As the case $(n,q)=(6,2)$ is included in
Table~\ref{tab:two}, we assume henceforth that $\Phi_n^*(q) > 1$. Thus
$\Phi^*_n(q)=r_1^{m_1}\cdots r_\ell^{m_\ell}$
where $\ell\ge1$, each $r_i$ is a prime, and each $m_i\ge1$. Then
$r_i=a_i n+1$ for some $a_i\ge1$ by
Lemma~\ref{lemma:phiq}\eqref{lemma:phiq1modn},
and since $r_i$ divides $|\mathrm{S}_c|=c!$ we see $c\ge r_i$.
Let $r$ be the largest prime divisor of $\Phi^*_n(q)$,
so $c\geq r\geq n+1 > d/4$. In
case (ii) this   implies that $c > c(c-5)/8$ which contradicts the assumption
$c\geq15$. Thus case (i) holds. 

The inequalities $c-2\le d$ and 
$d\le 4n$ show $a_i n+1\le c\le 4n+2$ and hence $a_i\le 4$.
The exponent $m_i$ of $r_i$ is severely constrained. If $a_i\ge2$, then
\[
  r_i=a_i n+1\ge 2n+1\ge \frac {d+3}2\ge\frac{c+1}2>\frac c2.
\]
Thus the prime~$r_i$ divides
$c!$ exactly once, and $m_i=1$. If $a_i=1$, then a similar argument shows 
$r_i=n+1\ge \frac {d+4}{4}\ge\frac{c+2}{4}\ge \frac{17}{4} >4$. The
inequalities $r_i>\frac{c}{4}$ 
and $r_i>4$ imply that $r_i$ divides $c!$ at most three times, and $m_i\le3$.
In summary, $\Phi^*_n(q)$ divides $f(n):=(n+1)^3(2n+1)(3n+1)(4n+1)$.
Since $n\ge4$, we have $f(n)\le 16n^7$.
All possible pairs $(n,q)$ for which $\Phi^*_n(q)\mid f(n)$ can be computed
using Algorithm~\ref{alg:Mstarnbig} with input $c=16$, $k=7$. The output
is listed in Table~\ref{tab:two}, and computed using~\cite{G}.

For given $n$ and $q$ the possible values for $c$ form an interval
$c_0\le c\le c_1$. Since $c-\delta(c,q)=d\le 4n$
the entries $c_0, c_1$ in Table~\ref{tab:two} can be determined as follows:
$c_0=\max(r,15)$ where~$r$ is the largest prime divisor of $\Phi_n^*(q)$,
and $c_1=4n+\delta(4n+2,q)$. $\hfill\ensuremath{\square}$
\end{proof}

\subsection*{Acknowledgements}

We thank the referee for numerous helpful suggestions.
The first, third and fourth authors acknowledge the support of  
ARC Discovery grant DP140100416.


\begin{thebibliography}{99}

\bibitem{BambergPenttila}
  John Bamberg and Tim Penttila. \newblock
  Overgroups of Cyclic Sylow Subgroups
  of Linear Groups. \newblock \emph{Communications in Algebra} {\bf 36} (2008),
  2503--2543.
 
\bibitem{Bang}
  A.\,S. Bang.
  \newblock Taltheoretiske Unders{\o}gelser.
  \newblock {\it Tidsskrift Math.} (5) {\bf 4} (1886), 70--80 and 130--137.

\bibitem{BJJMa}
  M. Biliotti, V. Jha, N.\,L. Johnson and A. Montinaro.
  \newblock Translation planes of order $q^2$ admitting a two-transitive
  orbit of length $q + 1$ on the line at infinity.
  \newblock \emph{Designs, Codes and Cryptography} {\bf44} (2007), 69--86.

\bibitem{BJJMb}
  M. Biliotti, V. Jha, N.\,L. Johnson and A. Montinaro.
  \newblock Two-transitive groups on a hyperbolic unital.
   \newblock \emph{J. Combin. Theory Series A} {\bf115} (2008), 526--533.

\bibitem{BJJMc}
  M. Biliotti, V. Jha, N.\,L. Johnson and A. Montinaro.
  \newblock Classification of projective translation planes of order $q^2$
  admitting a two-transitive orbit of length $q+1$.
  \newblock \emph{J. Geom.}, in press. 

\bibitem{BCP}
  W. Bosma, J. Cannon and C. Playoust.
  \newblock {The \Magma\  algebra system. {I}. {T}he user language}.
  \newblock {\it J. Symbolic Comput.} {\bf 24} (1997) (3-4), 235--265.

\bibitem{CW} A.\,R. Camina and E.\,A. Whelan.
  Linear groups and permutations.
  Research Notes in Mathematics, {\bf118}.
  Pitman (Advanced Publishing Program), Boston, MA, 1985.

  
\bibitem{DHP}
  G.\,G. Dandapat, J.\,L. Hunsucker and Carl Pomerance.
  \newblock Some new results on odd perfect numbers.
  \newblock {\it Pacific J. Math.} {\bf57} (1975), 359--364. 

\bibitem{DiMuro}
  Joseph DiMuro. \newblock
  On prime power order elements of general linear groups.
  \newblock {\em J. Algebra} {\bf 367} (2012), 222--236.

\bibitem{DF} D.\,S. Dummit and R.\,M. Foote.
  {\it Abstract algebra.} Third edition. Wiley \& Sons, 2004. xii+932.

\bibitem{Feit}
  W. Feit.
  \newblock On large Zsigmondy primes.
  \newblock {\it Proc. Amer. Math. Soc.} {\bf102} (1988), 29--36. 

\bibitem{GAP}
  The \GAP~Group. \GAP\;--\;Groups, Algorithms, and Programming,
  Version \rm{4.7.7}; 2015,
  {\fontsize{8}{8}\tt http://www.gap-system.org}

\bibitem{G}
  S.\,P. Glasby. Supporting \GAP\ and \Magma\ code,\newline
  {\fontsize{8}{8}\tt http://www.maths.uwa.edu.au/$\sim$glasby/RESEARCH}

\bibitem{GPPS} R. Guralnick, T. Penttila, C.\,E. Praeger and J. Saxl.
  \newblock Linear groups with orders having certain large prime divisors.
  \newblock \emph{Proc. London Math. Soc.} {\bf78} (1999), 167--214.

\bibitem{Hering} C. Hering.
  Transitive linear groups and linear groups which contain irreducible
  subgroups of prime order. {\it Geometriae Dedicata} {\bf2} (1974), 425--460. 

\bibitem{J} G. D. James. 
  \newblock {On the minimal dimensions of irreducible representations of
  symmetric groups.} \emph{Math. Proc. Cambridge Philos. Soc.}
  {\bf 94 } (1983),  417--424.

  
\bibitem{LPW} Lenstra-Pomerance-Wagstaff Conjecture, 
  {\fontsize{8}{8}\tt http://primes.utm.edu/mersenne/heuristic.html}.

\bibitem{L} Heinz L\"uneburg.
  \newblock Ein einfacher Beweis f\"ur den Satz von Zsigmondy \"uber primitive
  Primteiler von $A^N-1$. [A simple proof of Zsigmondy's theorem on primitive
  prime divisors of $A^N-1$.]
  \newblock \emph{Geometries and groups} (Berlin, 1981), pp. 219--222.

\bibitem{NieP}
  Alice C. Niemeyer and Cheryl~E.\ Praeger.
  A recognition algorithm for classical groups over finite fields.
  \newblock {\em Proc. London Math. Soc.} {\bf77} (1998), 117--169.

\bibitem{Rib}
  P. Ribenboim. The little book of big primes.
  \newblock Springer, 1991.

\bibitem{R} 
  Moshe Roitman. On Zsigmondy primes.
  {\it Proc. Amer. Math. Soc.} {\bf125} (1997), 1913--1919. 

\bibitem{Z}
  K. Zsigmondy. Zur Theorie der Potenzreste.
  \newblock \emph{Monatshefte f\"ur Mathematik} {\bf3} (1892), 265--284.

\end{thebibliography}
\end{document}